\theoremstyle{plain}
\newtheorem{Theorem}{Theorem}
\newtheorem{theorem}[Theorem]{Theorem}
\newtheorem{lemma}[Theorem]{Lemma}
\newtheorem{definition}[Theorem]{Definition}
\theoremstyle{definition}
\theoremstyle{remark}
\numberwithin{Theorem}{section}
\numberwithin{equation}{section}
\newcommand*\expandableInput[1]{\@@input#1 }
\newcommand{\Rmnum}[1]{\expandafter\@slowromancap\romannumeral #1@}
\tikzset{
    >=stealth',
    punkt/.style={
           rectangle,
           rounded corners,
           draw=black, thick,
           text width=6em,
           minimum height=2.5em,
           text centered},
    punktl/.style={
           rectangle,
           rounded corners,
           draw=black, thick,
           text width=8em,
           minimum height=3em,
           text centered},
    pil/.style={
           ->,
           shorten <=4pt,
           shorten >=4pt,},
    pildotted/.style={
           ->,
           shorten <=4pt,
           shorten >=4pt,
  dotted,},
      pildashed/.style={
           ->,
           shorten <=4pt,
           shorten >=4pt,
  dashed,
  }
}
\tikzset{state/.style={rectangle,rounded corners,draw=black, thick,minimum height=2em,inner sep=0pt,text centered,},}
\newcommand{\bigCI}{\mathrel{\text{\scalebox{1.07}{$\perp\mkern-10mu\perp$}}}}
\newcommand{\cmark}{\ding{51}}%
\title{Forward transition rates}
\author[1]{Kristian Buchardt}
\author[1,2,3]{Christian Furrer}
\author[2]{Mogens Steffensen}
\affil[1]{\footnotesize PFA Pension, Sundkrogsgade 4, DK-2100 Copenhagen \O, Denmark.}
\affil[2]{\footnotesize Department of Mathematical Sciences, University of Copenhagen, Universitetsparken 5, DK-2100 Copenhagen \O, Denmark.}
\affil[3]{\footnotesize Corresponding author. E-mail: \href{mailto:furrer@math.ku.dk}{furrer@math.ku.dk}.}
\date{}
\begin{document}
\maketitle

\addtocounter{footnote}{3} 

\begin{center}
{\sc Abstract}
\end{center}
{\small

The idea of forward rates stems from interest rate theory. It has natural connotations to transition rates in multi-state models. The generalization from the forward mortality rate in a survival model to multi-state models is non-trivial and several definitions have been proposed. We establish a theoretical framework for the discussion of forward rates. Furthermore, we provide a novel definition with its own logic and merits and compare it with the proposals in the literature. The definition turns the Kolmogorov forward equations inside out by interchanging the transition probabilities with the transition intensities as the object to be calculated. \\

\textbf{Keywords:} Forward rates; Doubly-stochastic Markov models; Life insurance; Kolmogorov forward equations \\

\textbf{2010 Mathematics Subject Classification:} 60J28; 60J75; 60J27; 91B30; 91G40

\textbf{JEL Classification:} G22; G12


}

\section{Introduction}

We provide a novel concept of forward transition rates in multi-state models with applications to life insurance as well as credit risk. It is a purely probabilistic concept that is tailor-made to match transition probabilities in a specific way, even in state models that are not Markovian. Though simple and constructive, our forward transition rates are different from the ones suggested in the literature, mainly with applications to life insurance in mind. Our contribution is three-fold. We propose a novel multi-state definition, we analyze its characteristics, and we compare these characteristics with those of other definitions proposed earlier.

Forward interest rates play an important role in bond market theory. They allow us to represent, at a fixed time point, both prices of nominal payments and prices of future interest rates `as if' the future interest rates were known and equal to the forward rates. The forward interest rate curve has even been considered as the fundamental object to model, rather than the interest rate, by a stochastic infinite-dimensional process with certain consistency constraints.

Two areas of finance and insurance that are closely linked, at least from a probabilistic point of view, are (reduced form) credit risk theory and life insurance mathematics. The relation between the areas has been explored and exploited by e.g.\ \citet{KraftSteffensen2007}. In both disciplines, the doubly-stochastic finite-state Markov chain is a fundamental stochastic model. The health and life status of an insured (or, in credit risk theory, the credit rating and solvency status of a firm) is modeled as a finite-state chain. In the doubly-stochastic Markov setting, this finite-state chain is assumed to be Markov, conditional on the transition rates. These rates depend on macro-demographic conditions in the population (or, in credit risk theory, macro-economic conditions in the market and the political regime).

When studying transition probabilities and transition densities in these models, the relation to forward rates in bond market theory is striking -- particularly in the simple survival model, where there are only two states of which one is absorbing. This was first observed and exploited by \citet{MilevskyPromislow2001}, while \citet{MiltersenPersson} discussed the extension to stochastic interest rates, allowing for correlation between mortality and interest. The idea about generalization to multi-state models was discussed and researched by several academics during those years but were put on halt by  \citet{Norberg2010} who explained thoroughly the drawback of each and every natural generalizing definition. As it turned out, this was not enough to kill the idea itself. Lately, \citet{ChristiansenNiemeyer} and \citet{Buchardt2017} have proposed different generalizations with individual characteristics.

We propose here yet another generalizing definition with its own logic and merits. It is based on the simple idea to consider the system of Kolmogorov forward equations not as a means of calculating transition probabilities for given transition rates, but instead as a means of calculating transition rates for given transition probabilities. One version of the idea was already considered by \citet{Norberg2010} but rejected due to general non-uniqueness of the solution, i.e. non-uniqueness of the forward transition rates. But Norberg's version took the initial state for given. If the equations have to hold for a portfolio of insured (or a portfolio of firms in the credit risk version), distributed over the state space at the starting time point, we obtain far stronger results regarding existence and uniqueness.

The definition has drawbacks in specific applications to insurance, though. The transition probabilities arising from our forward transition rates in an `assumed to be' Markovian setting are actually the correct transition probabilities -- this is exactly how they are constructed. However, the transition probabilities and our forward transition rates do not form together, in an `assumed to be' Markovian setting, the densities of transitions. This limits their application for calculation of relevant actuarial quantities. We indicate, however, how this drawback partly can be made up for by extending the model artificially. 

One part of our contribution is our specific proposal. Another part of our contribution is the establishment of a theoretical framework for the discussion and comparison of forward rate definitions. This allows us to give a clear presentation of the relation between the different suggestions pushed forward by \citet{ChristiansenNiemeyer}, \citet{Buchardt2017}, and this paper, and a highlight of the pros and cons of each idea. Our conclusion is different from the negative of \citet{Norberg2010}. We believe that the whole idea of forward transition rates in multi-state models is relevant to actuarial practice, and we provide a substantiating example. Which version of the forward transition rates you should use depends heavily on what you want to use it for. This in itself does not diminuate the power of the concept but it exposes the demand for a thoughtful analysis of it. This is what we provide here.

As mentioned, one area of application is life insurance where finite-state models are generally accepted as the fundamental tool for representation of payment streams and their expected (present) values. However, the idea of forward transition rates is also potentially applicable in credit risk theory. Many credit derivatives specify nominal payments upon transition of a firm’s state of financial health to a different state of financial health. Other derivatives specify nominal payments if a firm’s financial health is in a specific state in the future. These are exactly the payments also evaluated in life insurance, see also \citet{KraftSteffensen2007}.

A key difference between life insurance and credit risk theory is that for life insurance, the transition rates can often reasonably well be assumed to be independent of the interest rate. Uncertainty of a wide range of transition rates is mainly driven by socio-demographic developments which are presumably not, at least not by first order, linked to the economy as such. Thereby, the difficulties arising from correlation between interest and mortality rates in a survival model, pointed out by \citet{MiltersenPersson}, are of little relevance within that domain. In credit risk theory, the uncertainty of the transition rates is mainly driven by socio-economic developments that are, in contrast, strongly linked to the development of the interest rates.

In this exposition, we pay only little attention to the interest rate. Our focus is not on handling correlation with the interest rate but on handling, in the case of no correlation with the interest rate, the challenges arising from generalizing from the survival model to multi-state models. Therefore, it is targeted users of multi-state models in general and those in the area of life insurance in particular. The generalization to include correlation with interest rates is outside the scope here and is, together with discussing the dynamics of forward transition rate curves, postponed to future research.

The article is structured in the following way. In Section~\ref{sec:2} we present the probabilistic setup. In Section~\ref{sec:3} we define our new concept of forward transition rates. In Section~\ref{sec:review} we compare our definition with other suggestions in the literature, and we discuss how to partly `repair' the lack of match with transition densities.  In Section~\ref{sec:applic} we relate the work to actuarial practice. Section~\ref{sec:remarks} concludes.

\section{Setup and background}\label{sec:2}

Let $(\Omega,\mathbb{F},\mathbb{P})$ be some background probability space. Let $J\in\mathbb{N}$ and let $S=\{0,1,\ldots,J\}$ be some finite state space. In what follows, we consider a doubly-stochastic Markov setting where the state of the insured is described by a jump process (chain) with values in $S$. Instead of working with an abstract filtered probability space satisfying the usual conditions, we recall an explicit construction. Details can be found in \citet{Jacobsen2006}. The approach can be considered somewhat restrictive, but it allows a simpler and more concise discussion of forward transition rates.

\paragraph*{Notation and conventions}

Let $(Z_t)_{t \geq 0}$ be a stochastic process on $(\Omega,\mathbb{F},\mathbb{P})$ with values in some measurable space. We denote with $\mathbb{F}^Z := (\mathbb{F}^Z_t)_{t \geq 0}$ the natural filtration generated by $Z$ and to which it itself is adapted, i.e.\
\begin{align*}
\mathbb{F}_t^Z := \sigma( Z_s : s \leq t), \hspace{10mm} t\in[0,\infty).
\end{align*}
Furthermore, we define
\begin{align*}
\mathbb{F}_\infty^Z &:= \sigma\!\left( \bigcup_{t\geq 0} \mathbb{F}_t^Z\right)\!, \\
\mathbb{F}_{t+}^Z &:= \sigma(Z_s : s > t), \hspace{10mm} t \in [0,\infty).  
\end{align*}
We interpret $\mathbb{F}_\infty^Z$ as all the information generated by $Z$ and $\mathbb{F}_{t+}^Z$ as the future information generated by $Z$ (after time $t$).

In what follows, unless explicitly stated, all identities hold in an almost everywhere manner w.r.t.\ the probability measure $\mathbb{P}$.

\subsection{Doubly-stochastic Markov setting} \label{subsec:doubly}

For each possible transition $j,k\in S$, $k\neq j$, consider a stochastic process $[0,\infty) \ni t \mapsto \mu_{jk}(t)$ on $(\Omega,\mathbb{F},\mathbb{P})$ with values in $[0,\infty)$ and continuous sample paths. Using the Ionescu-Tulcea Theorem and the approach of \citet{Jacobsen2006} Chapter 3 and Section 7.2, we can construct a jump process $X:=(X_t)_{t\geq0}$ on $(\Omega,\mathbb{F},\mathbb{P})$ with values in $S$, which has a deterministic initial state $x_0 \in S$ and, conditionally on $\mu$, is Markovian with transition intensities $\mu$. Here we take $(\Omega,\mathbb{F},\mathbb{P})$ to be the canonical probability space associated with the construction.

That $X$ is Markovian conditionally on $\mu$ means that for all $t\in[0,\infty)$,
\begin{align*}
\mathbb{F}^X_{t+} \bigCI \mathbb{F}_t^X \mid \sigma(X_t)  \vee  \mathbb{F}^\mu_\infty,
\end{align*}
where $\sigma(X_t)  \vee  \mathbb{F}^\mu_\infty$ denotes the smallest $\sigma$-algebra that contains both $\sigma(X_t)$ and $ \mathbb{F}^\mu_\infty$. By construction, for $j,k\in S$, $k\neq j$, there exists (conditional) transition probabilities $P_{jk}^\mu$ such that for $0 \leq t < T < \infty$,
\begin{align*}
\mathbb{P}(X_T = k \, | \, \mathbb{F}_t^X \vee \mathbb{F}^\mu_\infty)
=
P_{X_t k}^\mu(t,T).
\end{align*}
Thus what we mean by the statement `$X$ conditionally on $\mu$ has transition intensities $\mu$', is that
\begin{align*}
\lim_{h \searrow 0} \frac{1}{h} P_{jk}^\mu(t,t+h)
=
\mu_{jk}(t),
\end{align*}
for all $t\in[0,\infty)$, which is well-defined as each $\mu_{jk}$ has continuous sample paths. Furthermore, the conditional transition probabilities $P_{jk}^\mu$ satisfy the Chapman-Kolmogorov equations and the \textit{backward and forward integral and differential equations} (the so-called Feller-Kolmogorov equations).

In the following, we assume that $\mathbb{E}[\mu_{jk}(t)]<\infty$ for all $j,k\in S$, $k\neq j$, and all $t\in[0,\infty)$.

\subsection{Preliminaries}

In general, $X$ is not unconditionally Markovian. An exception is whenever $\mu$ is deterministic; then $X$ is trivially Markovian with transition intensities $\mu$, and we recover the \textit{classic Markov chain life insurance setting}, see e.g.\ \citet{Hoem1969,Norberg1991}.

From now on fix a time-point $t\in[0,\infty)$. For valuation of future liabilities and pricing in pension and life insurance, interest lies in the expected accumulated cash flow, in particular expressions of the form $\mathbb{E}[ Z \, | \, \mathbb{F}_t^{X,\mu}]$, where $Z$ is some $\mathbb{F}_{t+}^{X,\mu}$/Borel-measurable random variable with values in $\mathbb{R}$ and finite expectation, $\mathbb{E}[|Z|]<\infty$. We think of $Z$ as a future payment. In this paper, we disregard the time value of money and market risks and focus exclusively on the expected accumulated cash flow. If the market risks are assumed to be independent of the biometric and behavioral risks, the following results and discussions immediately extend to valuation taking the time value of money into account. Details are given in Section~\ref{sec:remarks}, where dependency between market risks and biometric and behavioral risks is also briefly discussed.

Because $X$ is conditionally Markovian, we have the following results:
\begin{lemma}\label{lemma:cond}
It holds that
\begin{align*}
\mathbb{F}^{X,\mu}_{t+} &\bigCI \mathbb{F}_t^X \mid \sigma(X_t)  \vee  \mathbb{F}^\mu_t, \\
\mathbb{F}^\mu_{t+} &\bigCI \mathbb{F}_t^X \mid \mathbb{F}^\mu_t.
\end{align*}
\end{lemma}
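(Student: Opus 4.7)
My plan is to prove the two statements in reverse order, since the second identity serves as a natural ingredient in the proof of the first.

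For the second identity $\mathbb{F}^\mu_{t+} \bigCI \mathbb{F}_t^X \mid \mathbb{F}^\mu_t$, I would return to the explicit Ionescu--Tulcea construction of \citet{Jacobsen2006} recalled in Section~\ref{subsec:doubly}. There $X$ is built path-by-path from $\mu$ together with auxiliary randomisation variables $U$ (independent exponential holding-times and uniform jump-destination variables) that are independent of $\mu$. A structural feature of this construction is that $X_{[0,t]}$ only uses the values $\mu_s$ for $s\leq t$, so every bounded $\mathbb{F}_t^X$-measurable $f$ admits a representation $f = F(\mu_{[0,t]}, U)$. Independence of $U$ from $\mu$ then yields
\[
\mathbb{E}[f \mid \mathbb{F}_\infty^\mu] \;=\; \int F(\mu_{[0,t]}, u) \, \mathbb{P}_U(\mathrm{d}u) \;=\; \mathbb{E}[f \mid \mathbb{F}_t^\mu],
\]
which is the slightly stronger conditional independence $\mathbb{F}_t^X \bigCI \mathbb{F}_\infty^\mu \mid \mathbb{F}_t^\mu$. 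Restricting to $\mathbb{F}_{t+}^\mu \subseteq \mathbb{F}_\infty^\mu$ and invoking symmetry of conditional independence delivers the claim.

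For the first identity, by a monotone class argument I would restrict to bounded products $h = fg$ with $f$ bounded $\mathbb{F}^X_{t+}$-measurable and $g$ bounded $\mathbb{F}^\mu_{t+}$-measurable. Inserting the enlarged conditioning $\mathbb{F}_t^X \vee \mathbb{F}_\infty^\mu$ via the tower property and invoking the given conditional Markov property gives
\[
\mathbb{E}[fg \mid \mathbb{F}_t^X \vee \mathbb{F}_t^\mu] \;=\; \mathbb{E}\bigl[\, g \cdot \mathbb{E}[f \mid \sigma(X_t) \vee \mathbb{F}_\infty^\mu] \,\bigm|\, \mathbb{F}_t^X \vee \mathbb{F}_t^\mu \,\bigr].
\]
The integrand on the right is a measurable function of $X_t$, $\mu_{[0,t]}$, and $\mu_{(t,\infty)}$. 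Freezing the $\sigma(X_t) \vee \mathbb{F}_t^\mu$-measurable coordinates $X_t$ and $\mu_{[0,t]}$ and using the already-established second identity --- which says the conditional law of $\mu_{(t,\infty)}$ given $\mathbb{F}_t^X \vee \mathbb{F}_t^\mu$ coincides with its conditional law given $\mathbb{F}_t^\mu$ --- integration over $\mu_{(t,\infty)}$ leaves a $\sigma(X_t) \vee \mathbb{F}_t^\mu$-measurable quantity. Consequently the left-hand side above is $\sigma(X_t) \vee \mathbb{F}_t^\mu$-measurable and must coincide with $\mathbb{E}[fg \mid \sigma(X_t) \vee \mathbb{F}_t^\mu]$, which is precisely the first identity.

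The main obstacle will be making the freezing manoeuvre rigorous; this requires regular versions of the conditional distributions of $\mu_{(t,\infty)}$ given $\mathbb{F}_t^X \vee \mathbb{F}_t^\mu$ and given $\mathbb{F}_t^\mu$. On the canonical path space adopted in Section~\ref{subsec:doubly}, which is a Polish space, such regular conditional distributions exist, and the second identity equates them as measures. With that technicality disposed of, both identities reduce to routine manipulations of conditional expectations.
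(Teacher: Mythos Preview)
Your argument is correct and rests on the same two building blocks as the paper's proof: (i) the construction-based fact that the conditional distribution of $X_{[0,t]}$ given $\mathbb{F}^\mu_\infty$ is already $\mathbb{F}^\mu_t$-measurable (you phrase this via the representation $f = F(\mu_{[0,t]},U)$ with $U \bigCI \mu$, the paper via the $\mathbb{F}^\mu_t$-measurability of the conditional transition probabilities $P^\mu_{jk}(s,u)$ for $s\leq u\leq t$), and (ii) the assumed conditional Markov property $\mathbb{F}^X_{t+}\bigCI \mathbb{F}^X_t\mid \sigma(X_t)\vee\mathbb{F}^\mu_\infty$. Where the two proofs diverge is in how these ingredients are combined: the paper invokes a short chain of abstract conditional-independence lemmas from an external reference (enlarging the conditioning $\sigma$-algebra by $\sigma(X_t)$, then merging the two resulting statements via a chain rule), whereas you carry out the combination by hand via the tower property, a monotone-class reduction to products $fg$, and a freezing argument using regular conditional distributions on the canonical Polish path space. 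The paper's route is terser but opaque without the cited reference; yours is self-contained and makes the role of the Polish structure explicit, at the price of the regular-conditional-distribution technicality you correctly flag.
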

\begin{proof}
See Appendix~\ref{ap:A}. 
\end{proof}
As an immediate consequence of the lemma, when $Z$ is $\mathbb{F}_{t+}^{X,\mu}$/Borel-measurable with values in $\mathbb{R}$ and $\mathbb{E}[|Z|]<\infty$, then
\begin{align} \label{eq:Z1}
\mathbb{E}[ Z \, | \, \mathbb{F}_t^{X,\mu}]
=
\mathbb{E}[ Z \, | \, \sigma(X_t)  \vee  \mathbb{F}^\mu_t].
\end{align}
If $Z$ furthermore is $\mathbb{F}_{t+}^\mu$-measurable, then
\begin{align} \label{eq:Z2}
\mathbb{E}[ Z \, | \, \mathbb{F}_t^{X,\mu}]
=
\mathbb{E}[ Z \, | \, \mathbb{F}^\mu_t].
\end{align}
Therefore, we are really interested in quantities in the form $\mathbb{E}[ Z \, | \, \sigma(X_t)  \vee  \mathbb{F}^\mu_t]$ or $\mathbb{E}[ Z \, | \, \mathbb{F}^\mu_t]$. These are by definition $\sigma(X_t)  \vee  \mathbb{F}^\mu_t$-measurable or $\mathbb{F}^\mu_t$-measurable, and we can therefore think of them as functions of the hitherto observed transition rates, and, if $Z$ is only $\mathbb{F}_{t+}^{X,\mu}$-measurable, also of the current state of the insured. 

\subsection{Forward mortality}

The concept of forward transition rates, which we introduce in the following section, is derived from the concept of forward mortality, which again is inspired by the concept of forward interest rates -- for details we refer to \citet{Norberg2010} Section 2-4. To motivate the discussion on forward transition rates, we now recall the concept of forward mortality.

Consider a jump process $X$ with values in $\{0,1\}$, which conditionally on $\mu$ is Markovian with transition intensities $\mu_{01}$ and $\mu_{10}=0$. This setting corresponds to a survival model with stochastic mortality $\mu_{01}$, see also Figure~\ref{fig:1}.
\begin{figure}[h]
	\centering
	\scalebox{0.8}{
	\begin{tikzpicture}[node distance=2em and 0em]
		\node[punkt] (0) {alive};
		\node[anchor=north east, at=(0.north east)]{$0$};
		\node[punkt, right = 30mm of 0] (1) {dead};
		\node[anchor=north east, at=(1.north east)]{$1$};
	\path
		(0)	edge [pil]		node [above]		{$\mu_{01}(\cdot)$}		(1)
	;
	\end{tikzpicture}}
	\caption{Survival model with stochastic mortality $\mu_{01}$.}
	\label{fig:1}
\end{figure}
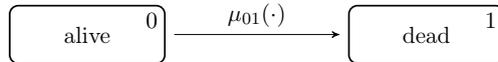
Various authors, including \citet{MilevskyPromislow2001}, \citet{Dahl2004}, and \citet{DahlMoller2006}, now essentially define the forward mortality rate as the $\mathbb{F}_t^\mu$-measurable and non-negative solution $(t,\infty) \ni T \mapsto m_{01}(t,T)$ to
\begin{align}\label{eq:forward_m}
\mathbb{E}\!\left[\left. e^{-\int_{(t,T]} \mu_{01}(s) \, \mathrm{d}s} \, \right| \mathbb{F}_t^\mu\right]
=
e^{-\int_{(t,T]} m_{01}(t,s) \, \mathrm{d}s}.
\end{align}
In what follows, we call $m_{01}$ defined by \eqref{eq:forward_m} the \textit{marginal forward mortality (rate)}, a choice of lingo which will become clear as we turn to the discussion of forward transition rate concepts in general.

Note that if we are allowed to interchange differentiation w.r.t.\ $T$ and integration w.r.t.\ $\mathbb{P}$ in \eqref{eq:forward_m}, the expression is equivalent to
\begin{align}\label{eq:forward_m_jump}
\mathbb{E}\!\left[\left. e^{-\int_{(t,T]} \mu_{01}(s) \, \mathrm{d}s} \mu_{01}(T) \, \right| \mathbb{F}_t^\mu\right]
=
e^{-\int_{(t,T]} m_{01}(t,s) \, \mathrm{d}s} m_{01}(t,T).
\end{align}
Consider now simple accumulated payments $[0,\infty) \ni s \mapsto B(s)$ given by
\begin{align*}
\mathrm{d}B(s)
&=
\mathds{1}_{(X_s = 0)} b_0(s) \, \mathrm{d}s
+
b_{01}(s) \, \mathrm{d}X_s, \hspace{5mm} s\in(0,\infty),\\
B(0)
&=
0,
\end{align*}
where $b_0$ and $b_{01}$ are continuous and real-valued deterministic functions. For valuation of future liabilities and pricing in pension and life insurance, interest lies in the expected accumulated cash flow, see e.g.\ \citet{BuchardtMoller2015}. A general definition suitable for our setup follows below; here $\mathbb{G}$ refers to some filtration on $(\Omega,\mathbb{F},\mathbb{P})$ containing all relevant information accessible to the valuator, and $B$ are accumulated payments assumed to be of finite variation, càdlàg and suitably integrable.
\begin{definition}\label{def:A}
Given information $\mathbb{G}$, the \textbf{expected accumulated cash flow} valuated at time $t\in[0,\infty)$ associated with the accumulated payments $B$ is defined by
\begin{align*}
(t,\infty) \ni T \mapsto A(t,T) 
:=
\mathbb{E}\!\left[B(T) - B(t) \left| \, \mathbb{G}_t\right.\right]\!.
\end{align*}
\end{definition}
We see that the expected accumulated cash flow valuated at time $t$ with relevant filtration $\mathbb{G}=\mathbb{F}^{X} \vee \mathbb{F}^\mu$ is given by
\begin{align}
A(t,T)
&=
\mathbb{E}\!\left[\left.
\mathbb{E}\!\left[B(T) - B(t) \left| \, \mathbb{F}_t^{X} \vee \mathbb{F}_\infty^\mu\right.\right]
\right| \mathbb{F}_t^{X} \vee \mathbb{F}_t^\mu\right] \nonumber \\
&=
\mathds{1}_{(X_t=0)}
\int_{(t,T]}
\mathbb{E}\!\left[\left.e^{-\int_{(t,s]} \mu_{01}(\tau) \, \mathrm{d}\tau}\left(b_0(s) + \mu_{01}(s)b_{01}(s)\right) \, \right| \mathbb{F}_t^\mu\right]
\mathrm{d}s \nonumber \\ \label{eq:Astoch}
&=
\mathds{1}_{(X_t=0)}
\int_{(t,T]}
e^{-\int_{(t,s]} m_{01}(t,\tau) \, \mathrm{d}\tau} \left(b_0(s) + m_{01}(t,s) b_{01}(s)\right) \mathrm{d}s,
\end{align}
where we have used the tower property, that $X$ conditionally on $\mu$ is Markovian with transition intensities $\mu_{01}$ and $\mu_{10}=0$, equation \eqref{eq:Z2}, and that $m_{01}$ satisfies \eqref{eq:forward_m} and \eqref{eq:forward_m_jump}. 

When $\mu$ is deterministic, we recover the classic setting and the expected accumulated cash flow reads
\begin{align} \label{eq:Adet}
\mathds{1}_{(X_t=0)}
\int_{(t,T]}
e^{-\int_{(t,s]} \mu_{01}(\tau) \, \mathrm{d}\tau} \left(b_0(s) + \mu_{01}(s) b_{01}(s)\right) \mathrm{d}s.
\end{align} 
Comparing \eqref{eq:Astoch} to \eqref{eq:Adet} reveals exactly the prowess of the marginal forward mortality: It allows one to calculate the expected accumulated cash flow in the usual manner regardless of the fact that the mortality is stochastic by replacing the stochastic mortality with the marginal forward mortality in standard formulae. The wish for similar results for multi-state models motivates the concept of forward transition rates, which we study in the following section.

\section{Forward equations rates}\label{sec:3}

In this section, we first provide a detailed exposition on the concept of forward transition rates for the doubly-stochastic Markov setting. In particular, we present the key properties which forward transition rate candidates desirably should satisfy. Motivated by this exposition, we next introduce the novel concept of \textit{forward equations rates} and hereby provide new insights regarding the possibility of generalizing the concept of forward mortality for multi-state models. In the next section, we compare the forward equations rates to previous forward transition rate definitions in the literature. This is done in both an abstract manner and through a detailed example for disability insurance.

\subsection{Forward transition rates}\label{subsec:ftr}

A natural question, as highlighted by \citet{Norberg2010}, is whether the concept of forward mortality can be adapted to and made fruitful in multi-state models, in particular the doubly-stochastic Markov setting, or whether the results surveyed in the previous paragraph rely on the specific structure of the survival model, in which case a generalization is unobtainable. In addition to the work of \citet{Norberg2010}, the question has also been investigated by e.g.\ \citet{ChristiansenNiemeyer} and \citet{Buchardt2017}.

To be more specific, the main question is if one can obtain similar replacement results regarding valuation as in the survival model. The main quantities of interest are
\begin{align*}
&\mathds{1}_{(X_T = k)}, \\ 
&\mathds{1}_{(X_{T-} = k)} \mu_{kl}(T), 
\end{align*}
where $T \in (t,\infty)$ and $k,l\in S, l \neq k$. Why these quantities? Consider e.g.\ simple accumulated payments $[0,\infty) \ni s \mapsto B(s)$ given by
\begin{align*}
\mathrm{d}B(s)
&=
b_{X_s}(s) \, \mathrm{d}s
+
\sum_{l \in S} \mathds{1}_{(X_{s-} \neq l)} b_{X_{s-}l}(s) \, \mathrm{d}N_{X_{s-}l}(s), \hspace{5mm} s\in(0,\infty),\\
B(0)
&=
0,
\end{align*}
where $N_{kl}$, $k,l \in S$, $l \neq k$, is the counting process counting the number of transitions from $k$ to $l$ for $X$, and $b_{k}$ and $b_{kl}$ are continuous real-valued deterministic functions describing the sojourn payments and payments upon transition, respectively. Omitting the technical details, it follows that $[0,\infty) \ni s \mapsto M(s)$ given by $M(0)=0$ and
\begin{align*}
M(s)
:=
B(s) - \int_{(0,s]}
\bigg(
\sum_{k \in S} \mathds{1}_{(X_u = k)} b_{k}(u)
+
\sum_{k,l \in S, l \neq k} \mathds{1}_{(X_{u-} = k)} \mu_{kl}(u) b_{kl}(u)
\bigg) \mathrm{d}u
\end{align*}
is a martingale w.r.t.\ the filtration $\mathbb{F}^X \vee \mathbb{F}_\infty^\mu$. By definition of the expected aggregated cash flow, it immediately becomes apparent why we are (solely, particularly) interested in the quantities $\mathds{1}_{(X_T = k)}$ and $\mathds{1}_{(X_{T-} = k)} \mu_{kl}(T)$.

Let $(t,\infty) \ni T \mapsto m_{jk}(t,T)$, $j,k \in S$, $k \neq j$, be some $\sigma(X_t) \vee \mathbb{F}_t^\mu$-measurable candidate forward transition rates. To fully generalize the replacement argument obtained in the survival model, one needs that there exists differentiable $\sigma(X_t)  \vee  \mathbb{F}^\mu_t$-measurable functions $[t,\infty) \ni T \mapsto P_{X_tk}^m(t,T)$, satisfying
\begin{align}\label{eq:kol_like}
\frac{\partial}{\partial T} P_{X_tk}^m(t,T)
&=
\sum_{l \neq k}
P_{X_tl}^m(t,T) m_{lk}(t,T)
-
P_{X_tk}^m(t,T) \sum_{l \neq k} m_{kl}(t,T), \hspace{5mm} k \neq X_t \\ \nonumber
\sum_{k \in S} P_{X_tk}^m(t,T) &= 1, \\ \nonumber
P_{X_tk}^m(t,t) &= \mathds{1}_{(X_t = k)}, \hspace{5mm} k \in S,
\end{align}
comparable to the Kolmogorov forward equations, such that
\begin{align}\label{eq:replace_m0}
&P_{X_tk}^m(t,T)
=
\mathbb{E}\!\left[ \mathds{1}_{(X_T = k)} \, | \, \sigma(X_t)  \vee  \mathbb{F}^\mu_t\right]\!, \\ \label{eq:replace_m00}
&P_{X_tk}^m(t,T) m_{kl}(t,T)
=
\mathbb{E}\!\left[ \mathds{1}_{(X_T = k)} \mu_{kl}(T) \, | \, \sigma(X_t)  \vee  \mathbb{F}^\mu_t\right]\!,
\end{align}
hold for all $k,l\in S$, $l\neq k$. Clearly, this boils down to two statements, that \eqref{eq:kol_like} and \eqref{eq:replace_m0} hold or that \eqref{eq:kol_like} and \eqref{eq:replace_m00} hold, or one stronger combined statement, namely that \eqref{eq:kol_like}, \eqref{eq:replace_m0}, and \eqref{eq:replace_m00} hold simultaneously. When referring to the first statement, we will often simply refer to \eqref{eq:replace_m0} on its own. In similar fashion, we also do not explicitly mention \eqref{eq:kol_like} when referring to the second or the combined statement.

The identities \eqref{eq:replace_m0} and \eqref{eq:replace_m00} are the cornerstones for our approach due the following reason. When \eqref{eq:replace_m0} holds, we have obtained successful replacement regarding the transition probabilities, while when \eqref{eq:replace_m00} holds, we have obtained successful replacement regarding the transition densities. Thus when they hold simultaneously, the expected accumulated cash flow is given by
\begin{align}\label{eq:Asotch2}
A(t,T)
&=
\mathbb{E}\!\left[B(T) - B(t) \left| \, \mathbb{F}^X_t \vee \mathbb{F}^\mu_t \right.\right] \nonumber \\
&=
\int_{(t,T]} \sum_{k \in S} P_{X_tk}^m(t,s) \bigg(
b_k(s) + \sum_{l \in S, l \neq k} m_{kl}(t,s) b_{kl}(s)
\bigg) \, \mathrm{d}s,
\end{align}
where we have employed similar techniques as in \eqref{eq:Astoch}, that $M$ is a martingale, and continuity of the (conditional) transition probabilities. The expected accumulated cash flow is then essentially in the `usual form' known from the classic Markov chain life insurance setting, the only difference being that the stochastic transition intensities have been replaced by the forward transition rates -- and thus a successful generalization of the replacement argument obtained in the survival model has been obtained.

Whenever $m$ is non-negative and continuous, one can think of $P_{X_t k}^m(t,\cdot)$ as transition probabilities for a Markovian jump process with initial state $X_t$ and transition intensities $m$ conditionally on all the information up until and including time $t$, compare to the construction of $X$ in the beginning of this Subsection~\ref{subsec:doubly}. As the forward transition rates $m$ in general are only $\sigma(X_t)  \vee  \mathbb{F}^\mu_t$-measurable, the transition intensities and thus also the transition probabilities for this Markovian jump process depend on the current state $X_t$. 

Note that we have yet to discuss existence and/or uniqueness of forward transition rate candidates satisfying \eqref{eq:kol_like}, \eqref{eq:replace_m0}, and \eqref{eq:replace_m00}. The identities just represent desirable properties for any definition of forward transition rates. In the next subsection, we introduce a novel forward transition rate candidate based directly on \eqref{eq:kol_like} and \eqref{eq:replace_m0}.

\subsection{Forward equations rates}\label{sec:fer}
In the following, we introduce a novel concept of \textit{forward equations rates} and discuss existence, uniqueness and other properties regarding this forward transition rate definition.

Define for each $j,k\in S$ the auxiliary $\mathbb{F}_t^\mu$-measurable function $[t,\infty) \ni T \mapsto \mathcal{P}_{jk}(t,T)$ by
\begin{align*}
 \mathcal{P}_{jk}(t,T) = \mathbb{E}\!\left[\left. P_{jk}^\mu(t,T) \, \right| \mathbb{F}^\mu_t\right]\!.
\end{align*}
We assume in the following that $\mathcal{P}_{jk}(t,\cdot)$ is differentiable for all $j,k \in S$. We then have the following forward transition rate definition.
\begin{definition}
Let $(t,\infty) \ni T \mapsto m_{jk}(t,T)$, $j,k\in S$, $k \neq j$, be $\mathbb{F}_t^\mu$-measurable. If the following system of equations are satisfied,
\begin{align}\label{eq:forward_eqs_def}
\frac{\partial}{\partial T} \mathcal{P}_{jk}(t,T)
=
\sum_{l \neq k}
\mathcal{P}_{jl}(t,T) m_{lk}(t,T)
-
\mathcal{P}_{jk}(t,T) \sum_{l \neq k} m_{kl}(t,T), \hspace{5mm} j,k \in S, k \neq j,
\end{align}
we say that $m$ are \textbf{forward equations rates} for $X$.
\end{definition}
This definition is similar to one suggested by \citet{Norberg2010}, but there is a single but crucial difference. Norberg essentially suggests to define the forward transition rates as the $\sigma(X_t) \vee \mathbb{F}_t^\mu$-measurable solution to the system of equations
\begin{align*}
\frac{\partial}{\partial T} \mathbb{E}\!\left[ \mathds{1}_{(X_T = k)} \, | \, \sigma(X_t)  \vee  \mathbb{F}^\mu_t\right]
=
&\sum_{l \neq k}
\mathbb{E}\!\left[ \mathds{1}_{(X_T = l)} \, | \, \sigma(X_t)  \vee  \mathbb{F}^\mu_t\right] m_{lk}(t,T) \\
&-
\mathbb{E}\!\left[ \mathds{1}_{(X_T = k)} \, | \, \sigma(X_t)  \vee  \mathbb{F}^\mu_t\right] \sum_{l \neq k} m_{kl}(t,T), \, k \neq X_t,
\end{align*}
which is just \eqref{eq:kol_like} combined with \eqref{eq:replace_m0}. The definition imposed by \eqref{eq:forward_eqs_def} can be seen as an extension involving all transition probabilities rather than only those related to the present state of the insured, $X_t$. As such, \eqref{eq:forward_eqs_def} is a constrained version of Norberg's definition requiring the equations to hold for a portfolio of insured with different present states covering all states. As noted by Norberg, his system of equations consists of $J(J-1)$ unknowns but only $(J-1)$ equations, which in general would lead to infinitely many solutions. In comparison, \eqref{eq:forward_eqs_def} consists of $J(J-1)$ equations, so we actually expect the forward equations rates to exist and be unique (under suitable regularity conditions). 

Together with the (trivially satisfied) conditions
\begin{align}\label{eq:conds}
\sum_{k \in S} \mathcal{P}_{jk}(t,T) = 1, \hspace{5mm} \mathcal{P}_{jk}(t,t) = \mathds{1}_{(j = k)},
\end{align}
it can be shown that the forward equations rates are defined exactly such that \eqref{eq:kol_like} and \eqref{eq:replace_m0} hold when setting $P_{X_tk}^m(t,\cdot)=\mathcal{P}_{j k}(t,\cdot)$ on $(X_t = j)$ for any $j\in S$ and all $k\in S$.

By definition, the forward equations rates are $\mathbb{F}_t^\mu$-measurable: They are not allowed to depend on the current state $X_t$. Later, when we investigate forward transition rate concepts in the literature, we return to a discussion of pros and cons regarding this property.

In general, the forward equations rates are allowed to be negative: In this case, one cannot think of $\mathcal{P}_{jk}(t,\cdot)$ as transition probabilities for some Markovian jump process, but must think of them as the solution to a system of differential equations similar to the Kolmogorov forward equations, namely \eqref{eq:forward_eqs_def} with conditions \eqref{eq:conds}.

Regarding existence and uniqueness of the forward equations rates we have the following result, which is applicable for so-called decrement models, where return to a state is not possible once it has been left. Examples include the disability model without recovery with and without policyholder behavior (free-policy and surrender).

\begin{theorem}\label{thm:existsunique}
Assume that $\mathcal{P}_{jk}(t,\cdot)$ is differentiable for all $j,k \in S$ and that $\mathcal{P}_{jk}(t,\cdot) = 0$ for $k < j$. Then the forward equations rates exist and are unique. If furthermore $\mathcal{P}_{jk}(t,\cdot)$ is continuously differentiable for all $j,k \in S$, then the forward equations rates are continuous.
\end{theorem}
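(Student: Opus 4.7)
The plan is to exploit the triangular structure induced by the decrement assumption and to invert the forward system all at once via a compact matrix reformulation. Introducing the standard convention $m_{jj}(t,T) := -\sum_{k \neq j} m_{jk}(t,T)$ and assembling $\mathcal{P}(t,T) = (\mathcal{P}_{jk}(t,T))_{j,k \in S}$ and $M(t,T) = (m_{jk}(t,T))_{j,k \in S}$ as $|S|\times|S|$ matrices, the system \eqref{eq:forward_eqs_def} becomes
\begin{equation*}
\frac{\partial}{\partial T} \mathcal{P}(t,T) = \mathcal{P}(t,T) M(t,T).
\end{equation*}
Since \eqref{eq:conds} implies that the rows of $\mathcal{P}(t,T)$ sum to $1$ while $M(t,T)$ has zero row sums by construction, the two formulations are equivalent: uniqueness of the off-diagonal entries of $M(t,T)$ is precisely uniqueness of the family $(m_{jk})_{k \neq j}$.

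The decisive step is to invert $\mathcal{P}(t,T)$. By the decrement hypothesis the matrix is upper triangular, so $\det \mathcal{P}(t,T) = \prod_{j} \mathcal{P}_{jj}(t,T)$, and strict positivity of the diagonal entries is what I need to establish. I would return to the construction of Subsection~\ref{subsec:doubly}: in the decrement model, using that $\mu_{jl}$ vanishes for $l<j$ and $P^\mu_{jl}$ vanishes for $l<j$, the Kolmogorov forward equation for the conditional transition probability reduces to
\begin{equation*}
\frac{\partial}{\partial T} P^\mu_{jj}(t,T) = -P^\mu_{jj}(t,T) \sum_{l > j} \mu_{jl}(T),
\end{equation*}
so $P^\mu_{jj}(t,T) = \exp\!\bigl(-\int_{(t,T]} \sum_{l > j} \mu_{jl}(s) \, \mathrm{d}s\bigr) > 0$ almost surely. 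Taking the $\mathbb{F}^\mu_t$-conditional expectation preserves strict positivity, so $\mathcal{P}_{jj}(t,T)>0$ and $\mathcal{P}(t,T)$ is invertible. This yields existence and uniqueness via the explicit formula $M(t,T) = \mathcal{P}(t,T)^{-1} \frac{\partial}{\partial T}\mathcal{P}(t,T)$.

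For the continuity claim, if every $\mathcal{P}_{jk}(t,\cdot)$ is continuously differentiable, then $\frac{\partial}{\partial T}\mathcal{P}(t,\cdot)$ is continuous, and $\mathcal{P}(t,\cdot)^{-1}$ is continuous since its entries are rational functions of the entries of $\mathcal{P}(t,\cdot)$ with nonvanishing denominator $\prod_j \mathcal{P}_{jj}(t,\cdot)$; continuity of $M(t,\cdot)$ then follows from the formula. As a by-product, inverses and products of upper triangular matrices are upper triangular, so $M(t,T)$ is automatically upper triangular and $m_{jk}(t,T)=0$ for $k<j$, which is the natural decrement property. The only delicate step is the strict positivity of $\mathcal{P}_{jj}(t,T)$; it cannot be extracted from the theorem's stated hypotheses alone and must be imported from the underlying doubly-stochastic construction.
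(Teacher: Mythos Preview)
Your argument is correct and follows essentially the same route as the paper: exploit the upper-triangular structure induced by the decrement hypothesis, show the diagonal entries $\mathcal{P}_{jj}(t,T)$ are strictly positive via the exponential formula $P^\mu_{jj}(t,T)=\exp\bigl(-\int_{(t,T]}\sum_{l>j}\mu_{jl}(s)\,\mathrm{d}s\bigr)>0$, and invert. The paper carries this out by first vectorizing the off-diagonal unknowns into $\tilde m$ and building an auxiliary upper-triangular coefficient matrix $A$ with repeated $\mathcal{P}_{jj}$'s on the diagonal, whereas you work directly with the full matrix identity $\partial_T\mathcal{P}=\mathcal{P}M$ and invert $\mathcal{P}$ itself. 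Your packaging is a bit cleaner and has the pleasant side effect that $m_{jk}=0$ for $k<j$ drops out automatically from the triangularity of $\mathcal{P}^{-1}\partial_T\mathcal{P}$, a point the paper asserts without explicit justification.

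One small omission: the definition of forward equations rates requires $\mathbb{F}_t^\mu$-measurability, and you do not check this. The paper does so via Cramer's rule. In your formulation it is immediate as well, since the entries of $M(t,T)=\mathcal{P}(t,T)^{-1}\partial_T\mathcal{P}(t,T)$ are rational functions of the $\mathbb{F}_t^\mu$-measurable quantities $\mathcal{P}_{jk}(t,T)$ and $\partial_T\mathcal{P}_{jk}(t,T)$ with nonvanishing denominator; you should state this explicitly.
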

\begin{proof}
See Appendix~\ref{ap:A}.
\end{proof}

Regarding further properties of the forward equations rates, we note the following.

To conclude that $m_{jk}(t,\cdot)=0$, one needs not only that direct transition from $j$ to $k$ is impossible but also that indirect transition from $j$ to $k$ is impossible. In other words, $\mu_{jk}=0$ does not imply $m_{jk}(t,\cdot)=0$ unless the stronger requirement $\mathcal{P}_{jk}(t,\cdot)=0$ holds. This can be verified by e.g.\ considering a disability model without recovery and active-mortality equal to zero. In particular, \eqref{eq:replace_m00} does not hold in general. Rather, the closest obtainable identity involves the difference between the sum over all transitions to and from, respectively, each state. To be rigorous, it follows under the assumption of interchangeable differentiation w.r.t.\ $T$ and integration w.r.t.\ $\mathbb{P}$, that
\begin{align}\label{eq:sum}
&\sum_{l \neq k} P_{X_t l}^m(t,T)m_{lk}(t,T)
-
\sum_{l \neq k} P_{X_t k}^m(t,T) m_{kl}(t,T) \\
=
&\sum_{l \neq k} \mathbb{E}\!\left[ \mathds{1}_{(X_T = l)} \mu_{lk}(T) \, | \, \sigma(X_t)  \vee  \mathbb{F}^\mu_t\right]
-
\sum_{l \neq k}  \mathbb{E}\!\left[ \mathds{1}_{(X_T = k)} \mu_{kl}(T) \, | \, \sigma(X_t)  \vee  \mathbb{F}^\mu_t\right]\!, \nonumber
\end{align}
for $k \in S$ using the definition of the forward equations rates $m$ and the (conditional) Kolmogorov forward equations for $X$. This exactly shows that \eqref{eq:replace_m00} holds only for the difference between the sum over transitions to and from, respectively, state $k$. For some specific classes of models this implies \eqref{eq:replace_m00}. To see this for competing risks models, note that for each death-state there is only one relevant transition, namely transition to this state from the alive-state, as the remaining transition intensities associated with the state are zero, whereby the above identity is identical to \eqref{eq:replace_m00}.

Because only \eqref{eq:replace_m0} holds in general for the forward equations rates, the replacement argument of the survival model cannot be generalized fully. However, if the insurance contract does not contain payments upon transition, i.e.\ if $b_{kl} = 0$ for $k,l\in S$, $l\neq k$, then only \eqref{eq:replace_m0} is required and the replacement argument generalizes. Thus if one is only interested in valuation of sojourn payments, the forward equations rates are a fruitful starting point.

Because the forward equations rates are defined directly from \eqref{eq:replace_m0}, the above discussion shows that any general definition of forward transition rates that is to satisfy both \eqref{eq:replace_m0} and \eqref{eq:replace_m00} cannot be $\mathbb{F}_t^\mu$-measurable (and thus must be allowed to depend on the current state $X_t$). This motivates the forward transition rate definition of \citet{Buchardt2017} which we discuss in the following section.

\section{Forward transition rate definitions in the literature}\label{sec:review}

We now review the contributions of \citet{ChristiansenNiemeyer} and \citet{Buchardt2017} in comparison with the \textit{forward equations rates}, which reveals strengths and weaknesses of each individual forward transition rate definition and leads to new insights regarding the forward transition rate concept in itself. 

\subsection{Alternative definitions from the literature}

\paragraph*{Christiansen \& Niemeyer}

In \citet{ChristiansenNiemeyer}, forward transition rates are not discussed independently of the financial market, but this can easily be done by taking interest rate zero in their setting. Christiansen \& Niemeyer define forward rates implicitly, essentially requiring they allow for replacement arguments comparable to ours for a set of insurance products. Based on the specific multi-state models and insurance products they consider, these requirements suggest the following definition: for each $j,k\in S$, $k\neq j$, the forward rate for this transition is the $\mathbb{F}_t^\mu$-measurable and non-negative solution $(t,\infty)\ni T \mapsto m_{jk}(t,T)$ to
\begin{align}\label{eq:forward_chrN}
\mathbb{E}\!\left[\left. e^{-\int_{(t,T]} \mu_{jk}(s) \, \mathrm{d}s} \right| \, \mathbb{F}_t^\mu\right]
=
e^{-\int_{(t,T]} m_{jk}(t,s) \, \mathrm{d}s}.
\end{align}
We note that $m_{jk}$ does not depend on the current state of the insured: it only depends on the hitherto observed transition rates. Furthermore, the definition of $m_{jk}$ does not involve the structure of the jump process $X$: they are `universal'. In particular, the marginal forward mortality given by \eqref{eq:forward_m} is a special case of the general definition of \eqref{eq:forward_chrN}. To see this, let $X$ be a conditionally Markovian jump process given $\mu$ with values in $\{0,1\}$ with $\mu_{10}=0$. Then \eqref{eq:forward_chrN} is just \eqref{eq:forward_m} in disguise. Therefore, we call $m_{jk}$ defined by \eqref{eq:forward_chrN} the \textit{marginal forward transition rate}, as it solely relies on the probabilistic structure of $\mu$. Consequentially, the marginal forward transition rates are particularly restrictive and idealistic. 

Whenever the marginal forward transition rates satisfy \eqref{eq:replace_m0}, they agree with the forward equations rates.
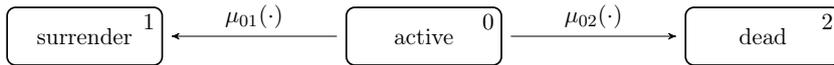
\begin{figure}[h]
	\centering
	\scalebox{0.8}{
	\begin{tikzpicture}[node distance=2em and 0em]
		\node[punkt] (0) {active};
		\node[anchor=north east, at=(0.north east)]{$0$};
		\node[punkt, left = 30mm of 0] (1) {surrender};
		\node[anchor=north east, at=(1.north east)]{$1$};
		\node[punkt, right = 30mm of 0] (2) {dead};
		\node[anchor=north east, at=(2.north east)]{$2$};
	\path
		(0)	edge [pil]		node [above]		{$\mu_{01}(\cdot)$}		(1)
		(0)	edge [pil]		node [above]		{$\mu_{02}(\cdot)$}		(2)
	;
	\end{tikzpicture}}
	\caption{Active-surrender-dead model with transition rates $\mu_{01}$ and $\mu_{02}$.}
	\label{fig:2}
\end{figure}
To see this, consider an active-surrender-dead model with transition rates from active to surrender and active to dead as in Figure~\ref{fig:2}. 
Then on $(X_t = 0)$ and with $k=0$, we can restate \eqref{eq:replace_m0} as
\begin{align*}
e^{-\int_{(t,T]} \left(m_{01}(t,s) + m_{02}(t,s)\right) \, \mathrm{d}s}
=
\mathbb{E}\!\left[\left. e^{-\int_{(t,T]} \left(\mu_{01}(s) + \mu_{02}(s)\right) \, \mathrm{d}s} \, \right| \mathbb{F}_t^\mu\right]
\end{align*}
On the other hand, by definition of the marginal forward transition rates, i.e.\ \eqref{eq:forward_chrN},
\begin{align*}
e^{-\int_{(t,T]} m_{01}(t,s) \, \mathrm{d}s}
&e^{-\int_{(t,T]} m_{02}(t,s) \, \mathrm{d}s} \\
&=
\mathbb{E}\!\left[\left. e^{-\int_{(t,T]} \mu_{01}(s) \, \mathrm{d}s} \, \right| \mathbb{F}_t^\mu\right]
\mathbb{E}\!\left[\left. e^{-\int_{(t,T]} \mu_{02}(s) \, \mathrm{d}s} \, \right| \mathbb{F}_t^\mu\right]\!.
\end{align*}
Collecting, we obtain the identity
\begin{align*}
\mathbb{E}\!\left[\left. e^{-\int_{(t,T]} \left(\mu_{01}(s) + \mu_{02}(s)\right) \, \mathrm{d}s} \, \right| \mathbb{F}_t^\mu\right]
=
\mathbb{E}\!\left[\left. e^{-\int_{(t,T]} \mu_{01}(s) \, \mathrm{d}s} \, \right| \mathbb{F}_t^\mu\right]
\mathbb{E}\!\left[\left. e^{-\int_{(t,T]} \mu_{02}(s) \, \mathrm{d}s} \, \right| \mathbb{F}_t^\mu\right]
\end{align*}
which, unless $\mu_{01}$ and $\mu_{02}$ are independent, is not satisfied in general, see also \citet{ChristiansenNiemeyer} Subsection 6.2 with interest rate zero. The situation is fully comparable to the discussion of forward mortalities and interest rates in the case of dependency between the biometric risks and the financial market, see e.g.\ \citet{ChristiansenNiemeyer} Subsection 6.1, \citet{MiltersenPersson}, and \citet{Buchardt2014}.

\citet{ChristiansenNiemeyer} consider a large class of diffusion processes for the transitions rates $\mu$ and show the equivalence between specific dependency structures and the identities \eqref{eq:replace_m0} and \eqref{eq:replace_m00} for a number of multi-state models, including a disability model without recovery. Hereby, they show that if one desires `universal' forward transitions rates that solely rely on the probabilistic structure of $\mu$, such as the marginal forward transition rates, one must assume a specific and often unrealistic dependency structure between the transitions rates, see e.g.\ \citet{ChristiansenNiemeyer} paragraph following Remark 5.5. If instead one is solely interested in sojourn payments and willing to specify a specific structure of the jump process $X$, the forward equations rates provide a natural alternative not confined to a specific dependency structure between the transition intensities.

\paragraph*{Buchardt}

The definition of forward transition rates studied by \citet{Buchardt2017} is for all practical purposes, see also \citet{Buchardt2017} Lemma 4.3, equivalent to setting
\begin{align}\label{eq:buc_forward}
(t,\infty) \ni T \mapsto m_{kl}(t,T)
:=
\frac{
\mathbb{E}\!\left[ \mathds{1}_{(X_T = k)} \mu_{kl}(T) \, | \, \sigma(X_t)  \vee  \mathbb{F}^\mu_t\right]
}
{
\mathbb{E}\!\left[ \left.\mathds{1}_{(X_T = k)} \, \right| \, \sigma(X_t)  \vee  \mathbb{F}^\mu_t\right]
}
\geq 0
\end{align}
for all $k,l \in S$, $l\neq k$, whenever the right-hand side is well-defined. This definition was already proposed by \citet{Norberg2010} Section 6, final paragraph.

We observe that if $\mu_{kl}=0$, then $m_{kl}(t,\cdot)=0$. Furthermore, from \citet{Buchardt2017} Theorem 4.4 it follows that \eqref{eq:replace_m0} holds (under some minor regularity conditions), such that by definition and rearrangement also \eqref{eq:replace_m00} is satisfied. On the other hand, contrary to the forward equations rates and the marginal forward transition rates, the forward transition rates of \eqref{eq:buc_forward} can by definition generally not be taken to be $\mathbb{F}_t^\mu$-measurable but must be allowed to depend on the current state $X_t$. Therefore, we call $m_{kl}$ defined by \eqref{eq:buc_forward} the \textit{state-wise forward transition rate}.

In competing risks models, the state-wise forward transition rates agree with the forward equations rates (but in general differ from the marginal forward transitions rates unless the transition rates are assumed to be independent). If one imposes a specific structure on the transition intensities, this result can be extended beyond competing risks models -- see also the example at the end of  Section~\ref{sec:applic}).

In the following subsection, similarities and differences between the state-wise forward transition rates and the forward equations rates are exemplified in the context of disability insurance. Furthermore, we exemplify how suitable state space and payment process `tweaks' might allow for valuation of transition payments using the forward equations rates (by `repairing' the lack of match with transition densities).

\subsection{Disability insurance -- `repairing' the forward equations rates}\label{subsec:repair}
Consider a disability model without recovery as in Figure~\ref{fig:3}. 
\begin{figure}[h]
	\centering
	\scalebox{0.8}{
	\begin{tikzpicture}[node distance=2em and 0em]
		\node[punkt] (1) {disabled};
		\node[anchor=north east, at=(1.north east)]{$1$};
		\node[punkt, left = 30mm of 1] (0) {active};
		\node[anchor=north east, at=(0.north east)]{$0$};
		\node[punkt, below = 20mm of 1] (2) {dead};
		\node[anchor=north east, at=(2.north east)]{$2$};
	\path
		(0)	edge [pil]		node [above]			{$\mu_{01}(\cdot)$}		(1)
		(1)	edge [pil]		node [right]			{$\mu_{12}(\cdot)$}		(2)
		(0)	edge [pil]		node [below left]		{$\mu_{02}(\cdot)$}		(2)
	;
	\end{tikzpicture}}
	\caption{Disability model without recovery.}
	\label{fig:3}
\end{figure}
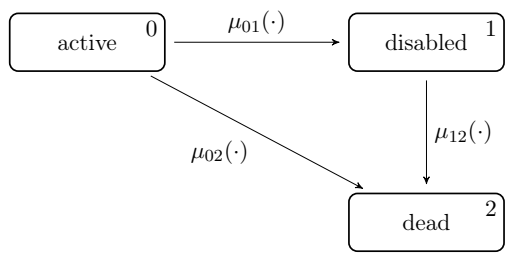
The insurance contract we have in mind is one stipulated by 
\begin{itemize}
\item Premium payments when active, financing:
\begin{itemize}
\item Disability coverage, including:
\begin{itemize}
\item Payment upon transition from active to disabled.
\item Sojourn payments when disabled.
\end{itemize}
\item Death coverage given by payments upon transition to the state dead.
\end{itemize}
\end{itemize}

\paragraph*{State-wise rates and forward equations rates}
Consider the state-wise forward transition rates from \eqref{eq:buc_forward}. These depend on the current state of the insured, thus we denote them by $m^{X_t}(t,\cdot)$. As discussed previously, both \eqref{eq:replace_m0}  and \eqref{eq:replace_m00} are satisfied by the state-wise forward transition rates.

In general, $m^{0}_{12}(t,\cdot)$ and $m^{1}_{12}(t,\cdot)$ differ. On $(X_t=0)$, i.e.\ when the insured is active at the present time, valuation of future sojourn payments and payments upon transition can be performed in a Markov model with $m^{0}(t,\cdot)$ as transition rates, see Figure~\ref{fig:disabled_buc} (left). On $(X_t=1)$, i.e.\ when the insured is presently disabled, valuation must be performed in a Markov model with different transition rates $m^{1}(t,\cdot)$, see Figure~\ref{fig:disabled_buc} (right).
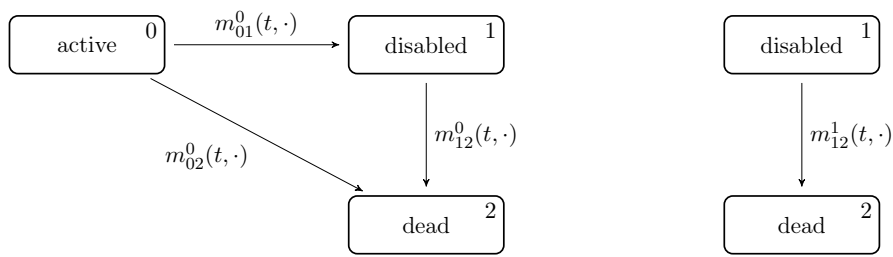
\begin{figure}[h]
	\centering
	\scalebox{0.8}{
	\begin{tikzpicture}[node distance=2em and 0em]
		\node[punkt] (1) {disabled};
		\node[anchor=north east, at=(1.north east)]{$1$};
		\node[punkt, left = 30mm of 1] (0) {active};
		\node[anchor=north east, at=(0.north east)]{$0$};
		\node[punkt, below = 20mm of 1] (2) {dead};
		\node[anchor=north east, at=(2.north east)]{$2$};
	\path
		(0)	edge [pil]		node [above]			{$m^{0}_{01}(t,\cdot)$}		(1)
		(1)	edge [pil]		node [right]			{$m^{0}_{12}(t,\cdot)$}		(2)
		(0)	edge [pil]		node [below left]		{$m^{0}_{02}(t,\cdot)$}		(2)
	;
	\end{tikzpicture}
	\hspace{30mm}
	\begin{tikzpicture}[node distance=2em and 0em]
		\node[punkt] (1) {disabled};
		\node[anchor=north east, at=(1.north east)]{$1$};
		\node[punkt, below = 20mm of 1] (2) {dead};
		\node[anchor=north east, at=(2.north east)]{$2$};
	\path
		(1)	edge [pil]		node [right]			{$m^{1}_{12}(t,\cdot)$}		(2)
	;
	\end{tikzpicture}}
	\caption{Two Markov models with state-wise forward transition rates replacing the doubly-stochastic Markov disability model without recovery: one to be used when the insured is presently active (left) and another to be used when the insured is presently disabled (right).}
	\label{fig:disabled_buc}
\end{figure}
In particular, four rather than three non-zero transition rates are required. From a practical and implementational point of view, valuation therefore remains slightly more complicated than in the classic Markov chain life insurance setting. Furthermore, the dependency of the forward transition rates on the current state of the insured makes them difficult to interpret.

Consider now instead the forward equations rates which we in a slight abuse of notation denote by $m(t,\cdot)$. As discussed previously, \eqref{eq:replace_m00} is in general not satisfied by the forward equations rates -- and this is also the case for the disability model without recovery. But valuation of future sojourn payments can be performed in a Markov model with $m(t,\cdot)$ as transition rates, see Figure~\ref{fig:disabled_forward}.
\begin{figure}[h]
	\centering
	\scalebox{0.8}{
	\begin{tikzpicture}[node distance=2em and 0em]
		\node[punkt] (1) {disabled};
		\node[anchor=north east, at=(1.north east)]{$1$};
		\node[punkt, left = 30mm of 1] (0) {active};
		\node[anchor=north east, at=(0.north east)]{$0$};
		\node[punkt, below = 20mm of 1] (2) {dead};
		\node[anchor=north east, at=(2.north east)]{$2$};
	\path
		(0)	edge [pil]		node [above]			{$m_{01}(t,\cdot)$}		(1)
		(1)	edge [pil]		node [right]			{$m_{12}(t,\cdot)$}		(2)
		(0)	edge [pil]		node [below left]		{$m_{02}(t,\cdot)$}		(2)
	;
	\end{tikzpicture}}
	\caption{Markov model with forward equations rates replacing the doubly-stochastic Markov disability model without recovery for valuation of sojourn payments.}
	\label{fig:disabled_forward}
\end{figure}
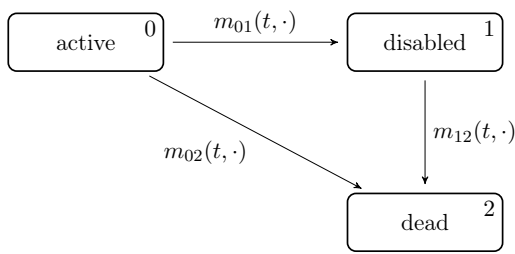
It can be shown that $m_{12}(t,\cdot) = m^{1}_{12}(t,\cdot)$ and that \eqref{eq:replace_m00} does hold for the forward equations rates on $(X_t = 1)$, confer with \eqref{eq:sum}. But in general, \eqref{eq:replace_m00} does not hold on $(X_t = 0)$, in particular, the forward equations rates will not allow one to valuate transition payments from active to disabled or active to dead. To summarize, only parts of the original disability insurance contract we had in mind can be valuated if one insists on using forward equations rates. On the other hand, these parts -- including the premiums payments when active and the sojourn payments when disabled -- can be handled inside the technical and/or numerical framework of the classic Markov chain life insurance setting.

\paragraph*{`Repairing' the forward equations rates}
We end this subsection by describing a way to tweak the original model slightly that extends the area of applicability of the forward equations rates.

Consider a new jump process $\tilde{X}$ defined from $X$ by adding separate death states as in Figure~\ref{fig:disabled2}. 
\begin{figure}[h]
	\centering
	\scalebox{0.8}{
	\begin{tikzpicture}[node distance=2em and 0em]
		\node[punkt] (1) {disabled};
		\node[anchor=north east, at=(1.north east)]{$1$};
		\node[punkt, left = 30mm of 1] (0) {active};
		\node[anchor=north east, at=(0.north east)]{$0$};
		\node[punkt, below = 20mm of 0] (3) {dead};
		\node[anchor=north east, at=(3.north east)]{$3$};
		\node[punkt, below = 20mm of 1] (2) {dead\textquotesingle};
		\node[anchor=north east, at=(2.north east)]{$2$};
	\path
		(0)	edge [pil]		node [above]			{$\mu_{01}(\cdot)$}		(1)
		(1)	edge [pil]		node [right]			{$\mu_{12}(\cdot)$}		(2)
		(0)	edge [pil]		node [left]			{$\mu_{02}(\cdot)$}		(3)
	;
	\end{tikzpicture}}
	\caption{Disability model without recovery but with separate death states.}
	\label{fig:disabled2}
\end{figure}
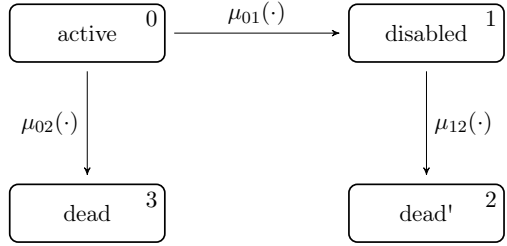
For all practical purposes, the models with and without separate death states are interchangeable as long as the sojourn payments in the two death states do not differ. To be rigorous, the new jump process satisfies 
\begin{align*}
\tilde{X}_t
=
\mathds{1}_{(X_t \in \{0,1\})} X_t
+
\mathds{1}_{(N_{12}(t) = 1)} 2
+
\mathds{1}_{(N_{02}(t) = 1)} 3,
\end{align*}
where $N$ is the multivariate counting process associated with $X$. Define $\tilde{\mu}$ as the corresponding (conditional) transition intensities, such that e.g.\ $\tilde{\mu}_{03} = \mu_{02}$, $\tilde{\mu}_{02} = 0$, and $\tilde{\mu}_{12} = \mu_{12}$.  Then $\tilde{X}$ is also conditionally Markovian given $\tilde{\mu}$, as described initially in Subsection~\ref{subsec:doubly}, but contains a separate death state for death after disability. 

One can show that while the state-wise forward transition rates remain unaffected, the forward equations rates for $X$ and $\tilde{X}$ differ. Denote the latter by $\tilde{m}(t,\cdot)$. We cannot in general conclude that the forward equations rate $\tilde{m}_{02}(t,\cdot)$ is zero because indirect transition from state $0$ to state $2$ remains possible, which leads to the Markov model of Figure~\ref{fig:disabled2_forward}.
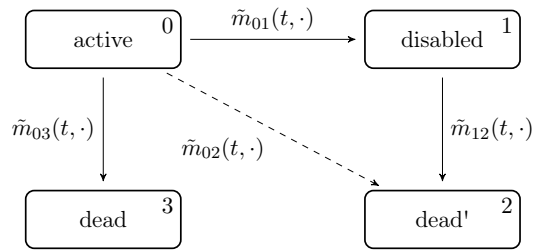
\begin{figure}[h]
	\centering
	\scalebox{0.8}{
	\begin{tikzpicture}[node distance=2em and 0em]
		\node[punkt] (1) {disabled};
		\node[anchor=north east, at=(1.north east)]{$1$};
		\node[punkt, left = 30mm of 1] (0) {active};
		\node[anchor=north east, at=(0.north east)]{$0$};
		\node[punkt, below = 20mm of 0] (3) {dead};
		\node[anchor=north east, at=(3.north east)]{$3$};
		\node[punkt, below = 20mm of 1] (2) {dead\textquotesingle};
		\node[anchor=north east, at=(2.north east)]{$2$};
	\path
		(0)	edge [pil]		node [above]			{$\tilde{m}_{01}(t,\cdot)$}		(1)
		(1)	edge [pil]		node [right]			{$\tilde{m}_{12}(t,\cdot)$}		(2)
		(0)	edge [pil]		node [left]			{$\tilde{m}_{03}(t,\cdot)$}		(3)
		(0)	edge [pildashed]	node [below left]		{$\tilde{m}_{02}(t,\cdot)$}		(2)
	;
	\end{tikzpicture}}
	\caption{Markov model with forward equations rates replacing the doubly-stochastic Markov disability model without recovery with separate death states for alternative valuation of sojourn payments. Note the non-zero transition rate $\tilde{m}_{02}(t,\cdot)$ even though $\tilde{\mu}_{02}(\cdot)=0$.}
	\label{fig:disabled2_forward}
\end{figure}
In general, \eqref{eq:replace_m00} does not hold on $(X_t=0)$. Though when also $k=0$ and $l=3$, corresponding to valuation of payments upon transition from active to dead, \eqref{eq:replace_m00} is satisfied, see e.g.\ \eqref{eq:sum}. For valuation of payments upon transition from disabled to dead when the insured is presently active, we can rewrite \eqref{eq:sum} and obtain the following on $(X_t = 0)$:
\begin{align*}
\mathbb{E}\!\left[ \mathds{1}_{(X_T = 1)} \mu_{12}(T) \, | \, \sigma(X_t)  \vee  \mathbb{F}^\mu_t\right]
&=
\mathbb{E}\!\left[ \mathds{1}_{(\tilde{X}_T = 1)} \tilde{\mu}_{12}(T) \, | \, \sigma(\tilde{X}_t)  \vee  \mathbb{F}^{\tilde{\mu}}_t\right] \\
&=
\tilde{P}_{\tilde{X}_t0}^{\tilde{m}}(t,T)\tilde{m}_{02}(t,T)
+
\tilde{P}_{\tilde{X}_t1}^{\tilde{m}}(t,T)\tilde{m}_{12}(t,T).
\end{align*}
Thus for accumulated payments given by
\begin{align*}
\mathrm{d}B^{(1)}(s)
&=
\mathds{1}_{(X_{s-} = 1)} b_{12}(s) \, \mathrm{d}N_{X_{s-}2}(s), \hspace{5mm} s\in(0,\infty),\\
B^{(1)}(0)
&=
0,
\end{align*}
corresponding exactly to payment $b_{12}$ upon transition from disabled to dead, the expected accumulated cash flow can on $(X_t = 0)$ be written as
\begin{align*}
A^{(1)}(t,T)
=
\int_{(t,T]}
\left(\tilde{P}_{00}^{\tilde{m}}(t,s)\tilde{m}_{02}(t,s)b_{12}(s)
+
\tilde{P}_{01}^{\tilde{m}}(t,s)\tilde{m}_{12}(t,s)b_{12}(s)\right) \mathrm{d}s.
\end{align*}
Thus valuation of the payments given by $B^{(1)}$ can be performed in the Markov model of Figure~\ref{fig:disabled2_forward} with $\tilde{m}(t,\cdot)$ as transition rates through valuation of a different payment process  with payment $b_{12}$ upon transition from disabled to dead\textquotesingle\,\,as well as payment $b_{12}$ upon transition from active to dead\textquotesingle.

Similar arguments apply for the payments upon transition from active to disabled. Here valuation can also be performed in a Markov model with $\tilde{m}(t,\cdot)$ as transition rates through valuation of a different payment process with payment $b_{01}$ upon transition from active to disabled as well as payment $b_{01}$ upon transition from active to disabled-dead. Thus all parts of the original disability insurance contract we had in mind can be valuated in a Markov model, namely that of Figure~\ref{fig:disabled2_forward}, using forward equations rates if (and only if) one is willing to tweak the setup suitably. In particular, four rather than three non-zero transition rates are required. This means that from a practical and implementation point of view, valuation of payments upon transition remains slightly more complicated than in the classic Markov chain life insurance setting. Furthermore, the resulting forward transition rates are difficult to interpret.

Whether one works with state-wise forward transition rates or forward equations rates, we can conclude that four rather than three non-zero transition rates are required for the disability model without recovery. On the other hand, the above arguments do not generalize to arbitrary (non-decrement) models but rely extensively on the (decrement) structure of the disability model without recovery. So while it seems equally demanding to implement forward equations rates and state-wise forward transition rates (recall Figure~\ref{fig:disabled_buc}) for valuation in the disability model without recovery, only implementation of the latter has a natural generalization to the most advanced models.

\subsection{Summary and model calibration}

All definitions discussed in the previous subsections extend the concept of forward mortality rates to a multi-state framework and contain the marginal forward mortality as a special case. The properties of the various forward transition rate definitions are summarized in Table~\ref{tbl:compare}.
\begin{table}[h]
\centering
\renewcommand{\arraystretch}{1.5}
\small{
\begin{tabularx}{0.85\linewidth}{X|cccc}
& Universal & $\mathbb{F}_t^\mu$-measurable & \eqref{eq:replace_m0} & \eqref{eq:replace_m00} \\ \hline
Marginal rates & \cmark & \cmark & & \\
Forward equations rates & & \cmark & \cmark & \\
State-wise rates & & & \cmark & \cmark \\
\end{tabularx}}
\caption{Comparison of properties of the different forward transition rate definitions. Here the definition is said to be `Universal' if it does not depend on the specific structure of the jump process but only relies on the probabilistic structure of the transition rates.}
\label{tbl:compare}
\end{table}

The extensions all express different ambitions. The definition of the marginal forward transitions rates desires a sort of `universality', in the sense that this definition does not rely on the specific structure of the state space or distribution of $X$ but only relies relies on the probabilistic structure of $\mu$. In general, this will not lead to a successful replacement argument, neither for sojourn payments, consult \eqref{eq:replace_m0}, nor payments upon transition, consult \eqref{eq:replace_m00}. In the definition of the forward equations rates, this condition is relaxed and only $\mathbb{F}_t^\mu$-measurability is required, such that the rates still do not depend on the current state $X_t$ of the insured. The replacement argument is then successful for sojourn payments but not in general for payments upon transition. Finally, the state-wise forward transition rates are allowed to depend on the current state of the insured, in which case the replacement argument holds for both sojourn payments and payments upon transition.

Another point of comparison between the definitions consists of comparing the quantities needed for a calibration similar to that of forward mortalities and forward interest rates.

To calibrate the marginal forward transition rates, we require the quantities
\begin{align*}
(t,\infty) \ni T \mapsto \mathbb{E}\!\left[\left. e^{-\int_{(t,T]} \mu_{jk}(s) \, \mathrm{d}s} \, \right| \mathbb{F}_t^\mu\right]
\end{align*}
for $j,k\in S$, $k\neq j$. These quantities are not directly linked to any insurance contracts in the market.

To calibrate the forward equations rates, we require the quantities
\begin{align*}
(t,\infty) \ni T \mapsto \mathcal{P}_{jk}(t,T)=\mathbb{E}\!\left[\left. P_{jk}^\mu(t,T) \, \right| \mathbb{F}^\mu_t\right]
\end{align*}
for $j,k \in S$. Assuming interest rate zero, these quantities are directly linked to insurance contracts consisting of sojourn payments.

To calibrate the state-wise forward transition rates, we require the quantities
\begin{align*}
&(t,\infty) \ni T \mapsto \mathbb{E}\!\left[ \mathds{1}_{(X_T = k)} \, | \, \sigma(X_t)  \vee  \mathbb{F}^\mu_t\right]
=
\mathbb{E}\!\left[P_{X_tk}^\mu(t,T) \, | \, \sigma(X_t)  \vee  \mathbb{F}^\mu_t\right]\!, \\
&(t,\infty) \ni T \mapsto \mathbb{E}\!\left[ \mathds{1}_{(X_T = k)} \mu_{kl}(T) \, | \, \sigma(X_t)  \vee  \mathbb{F}^\mu_t\right]
=
\mathbb{E}\!\left[ P_{X_tk}^\mu(t,T) \mu_{kl}(T) \, | \, \sigma(X_t)  \vee  \mathbb{F}^\mu_t\right]
\end{align*}
for $k,l \in S$, $l \neq k$. Assuming interest rate zero, these quantities are directly linked to insurance contracts consisting of sojourn payments and payments upon transition.

\section{Forward-thinking and actuarial practice} \label{sec:applic}

Doubly-stochastic extension of classic actuarial multi-state models allows for the inclusion of systematic (undiversifiable) risk and market consistent valuation in accordance with the Solvency II regulatory framework, see e.g.\ the discussion in the beginning of \citet{Buchardt2014}. In itself, multi-state modeling gives rise to computational complications, which historically have been circumvented by imposing a suitable Markovian structure, whereby the transition probabilities can be found by solving ordinary differential equations. In the classic Markov chain life insurance setting, the jump process describing the state of the insured is assumed Markovian, and the computational task is reduced to solving the system of Kolmogorov forward equations. This is not the case when considering doubly-stochastic extensions, as any previous Markovian structure typically becomes void. In other words, the old weapons of the actuarial practitioner pose no threat to the new problems at hand. The development of mathematically sound definitions of forward transition rates is an attempt to once more stack the deck in favor of the actuarial practitioner. Conceptually, we are dealing with a whetstone for old weapons.

The practical relevance is essentially the following. The replacement conditions of \eqref{eq:kol_like}-\eqref{eq:replace_m00} allow for a two-step valuation procedure: First, calibrate the forward transition rates, and then calculate the cash flow using classic numerical schemes (solving systems of ordinary differential equations). If the first step is not too demanding, the actuarial practitioner can avoid implementing new advanced numerical schemes and instead rely on already available platforms. This approach can be a valuable shortcut to the implementation of systematic risks in the practitioner's current valuation software.

Two-step procedures are of course not necessary; a general alternative is to solve the system of Kolmogorov forward partial integro-differential equations, see e.g.\ \citet{Buchardt2017}. But the two-step approach also shows its strengths in a conceptual sense: it transforms the computational complications to a question of calibration of forward transition rates. It is our belief that this transformation is beneficial to e.g.\ actuarial practitioners searching for simple benchmark models. A similar way of thinking in a slightly different framework drives the work of \citet{ChristiansenNiemeyer}. We provide a substantiating example at the end of this subsection.

The concept of forward transition rates is derived from the concept of forward mortality, which again is inspired by the concept of forward interest rates. In the context of the latter and as an alternative to short-rate modeling, \citet{HeathJarrowMorton} propose a general framework, the so-called \textit{Heath-Jarrow-Morton framework}, where the modeling object of interest is the entire forward interest rate curve. In the context of longevity risk, a similar framework has been developed by \citet{Bauer2012}, where the marginal forward mortality curve rather than the stochastic mortality is the modeling object of interest. A similar change in modeling paradigm for multi-state settings might also prove valuable to practitioners. This requires mathematically sound definitions of forward transition rates (which we provide and discuss here) as well as the development of a framework similar to the Heath-Jarrow-Morton framework for doubly-stochastic multi-state Markov models (which we have postponed to future research).

In the following example, we illustrate the relevance of forward transition rates to actuarial practice as discussed above, both from a conceptual as well as a computational point of view.

\paragraph*{Survival model with surrender and free policy}

Consider the doubly-stochastic model illustrated in Figure~\ref{fig:key}. We assume that $\eta$, $\rho$, $\psi$, and $\sigma$ are non-negative and continuous, and that $\psi$ and $\sigma$ are also deterministic. Thus we allow for (possibly dependent) stochastic mortality and stochastic surrender rates.
\begin{figure}[h]
	\centering
	\scalebox{0.8}{
	\begin{tikzpicture}[node distance=2em and 0em]
		\node[punkt] (1) {dead};
		\node[anchor=north east, at=(1.north east)]{$2$};
		\node[punkt, left = 30mm of 1] (0) {active};
		\node[anchor=north east, at=(0.north east)]{$0$};
		\node[punkt, left = 30mm of 0] (2) {surrender};
		\node[anchor=north east, at=(2.north east)]{$3$};
		\node[punkt, below = 20mm of 1] (4) {dead \\ free policy};
		\node[anchor=north east, at=(4.north east)]{$4$};
		\node[punkt, left = 30mm of 4] (3) {free policy};
		\node[anchor=north east, at=(3.north east)]{$1$};
	\path
		(0)	edge [pil]		node [above]			{$\eta(\cdot)$}						(1)
		(0)	edge [pil]		node [above]			{$\rho(\cdot)$}					(2)
		(0)	edge [pil]		node [right]			{$\psi(\cdot)$}						(3)
		(3)	edge [pil]		node [below]			{$\eta(\cdot)$}						(4)
		(3)	edge [pil]		node [below left]		{$\rho(\cdot) + \sigma(\cdot)$} 	(2)
	;
	\end{tikzpicture}}
	\caption{Doubly-stochastic survival model with options of surrender and conversion to free policy and with stochastic mortality and stochastic baseline surrender rate.}
	\label{fig:key}
\end{figure}
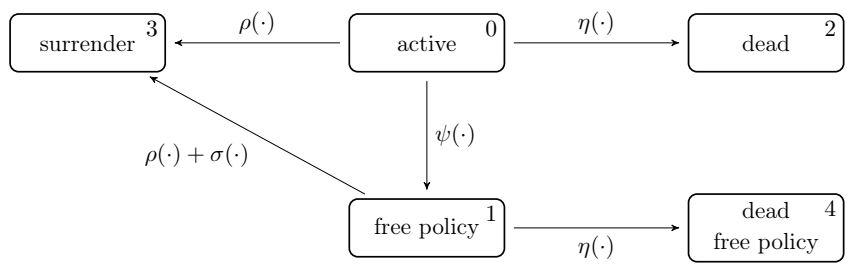
When $\eta$ and $\rho$ are deterministic and $\sigma=0$, we are within the class of models considered by \citet{BuchardtMoller2015}, see in particular Section 3.2 therein, where the connection to actuarial practice is also carefully explained.

Under certain regularity conditions, straightforward calculations (given in Appendix~\ref{ap:A}) show that the state-wise forward transition rates  given by \eqref{eq:buc_forward} take the form
\begin{align}
m_{01}(t,T)
&=
\psi(T), \nonumber \\
m_{02}(t,T)
=
m_{14}(t,T)
&=
\frac{
\mathbb{E}\!\left[e^{-\int_{(t,T]} \left(\eta(s) + \rho(s)\right) \, \mathrm{d}s} \eta(T) \, \big| \, \mathbb{F}_t^{\eta,\rho}\right]
}
{\mathbb{E}\!\left[e^{-\int_{(t,T]} \left(\eta(s) + \rho(s)\right) \, \mathrm{d}s} \, \big| \, \mathbb{F}_t^{\eta,\rho}\right]
}, \label{eq:eta}\\
m_{03}(t,T)
&=
\frac{
\mathbb{E}\!\left[e^{-\int_{(t,T]} \left(\eta(s) + \rho(s)\right) \, \mathrm{d}s} \rho(T) \, \big| \, \mathbb{F}_t^{\eta,\rho}\right]
}
{\mathbb{E}\!\left[e^{-\int_{(t,T]} \left(\eta(s) + \rho(s)\right) \, \mathrm{d}s} \, \big| \, \mathbb{F}_t^{\eta,\rho}\right]
}, \label{eq:rho} \\
m_{13}(t,T)
&=
m_{03}(t,T) + \sigma(T), \nonumber
\end{align}
with the state-wise forward transition rates being zero for the remaining indices.

The following two-step valuation procedure is now self-evident: First, calculate \eqref{eq:eta} and \eqref{eq:rho}, and then calculate cash flows using classic methods. To illustrate the possible advantages of the two-step procedure within this example, assume that $(\eta,\rho)$ belongs to the class of affine processes. Then \eqref{eq:eta} and \eqref{eq:rho} can be calculated by solving simple systems of ordinary differential equations, see e.g.\ \citet{Duffie2000}, \citet{Buchardt2016}, and \citet{LarsPhD} Chapter 5. In contrast, the general approach requires either solving the system of Kolmogorov forward partial integro-differential equations, see e.g.\ \citet{Buchardt2017}, or applying Monte Carlo methods. With reference to the study of numerical efficiency by \citet{Buchardt2016} in a comparable setting, we conclude that in the affine setting, the two-step procedure is more efficient than the general approach.

In this example, the advantage of the two-step approach is illustrated using the state-wise forward transition rates, however, the conclusion also holds for the forward equations rates. On the basis of \eqref{eq:eta} and \eqref{eq:rho} there exists an $\mathbb{F}_t^{\eta,\rho}$-measurable version of the state-wise forward transition rates. In particular, under certain regularity conditions, the forward equations rates and the state-wise forward transition rates must agree, which implies that the forward equations rates satisfy \eqref{eq:replace_m00}. Note also that if $\eta$ and $\rho$ are independent, we obtain exactly the marginal forward transitions rates.  It is the specific structure of the transition intensities within the model that makes the forward equations rates and the state-wise forward transition rates agree. Characterizing the class of models for which this is the case is postponed to future research

\section{Concluding remarks}\label{sec:remarks}

In the previous sections, we have focused solely on biometric and behavioral risks while not taking market risks and the time value of money into account. Hence we have only dealt with replacement arguments for the expected accumulated cash flow. In the context of reserving and pricing, interest lies in the prospective reserve, i.e.\ the expected present value of future payments. We now provide a short and informal discussion using market consistent valuation principles for life insurance and pensions, see e.g.\ \citet{SteffensenMoller2007}.

Let $r$ be some continuous short rate. If the short rate is deterministic, then the prospective reserve $V$ is simply given by
\begin{align*}
V(t)=\int_{(t,\infty)} \hspace{-3mm} e^{-\int_t^s r(u) \, \mathrm{d}u} \, A(t,\mathrm{d}s),
\end{align*}
assuming the integral exists. If \eqref{eq:replace_m0} and \eqref{eq:replace_m00} hold, then
\begin{align*}
V(t)
=
\int_{(t,\infty)} \hspace{-3mm} e^{-\int_t^s r(u) \, \mathrm{d}u} \sum_{k \in S} P_{X_tk}^m(t,s) \bigg(
b_k(s) + \hspace{-3mm} \sum_{l \in S, l \neq k} \hspace{-2mm} m_{kl}(t,s) b_{kl}(s)
\bigg) \, \mathrm{d}s,
\end{align*}
confer with \eqref{eq:Asotch2}. If the short rate is stochastic but the market risks are independent of the biometric and behavioral risks, the above instead reads
\begin{align} \label{eq:reserve}
V(t)=
\int_{(t,\infty)} \hspace{-3mm} e^{-\int_t^s f(t,u) \, \mathrm{d}u} \sum_{k \in S} P_{X_tk}^m(t,s) \bigg(
b_k(s) + \hspace{-3mm} \sum_{l \in S, l \neq k} \hspace{-2mm} m_{kl}(t,s) b_{kl}(s)
\bigg) \, \mathrm{d}s,
\end{align}
where $f(t,\cdot)$ is the usual forward interest rate associated with the short rate $r$. Thus as long as the markets risks are independent of the biometric and behavioural risks, the results and discussions of the previous sections extend from the expected accumulated cash flow to the prospective reserve in an immediate manner.

If there is dependency between the market risks and the biometric and behavioral risks, \eqref{eq:reserve} ceases to hold and the previous results and discussions are not directly extendable. Forward transition and interest rates in the context of dependency between markets risks and biometric and behavioral risks are therefore not discussed in this paper. To our knowledge, only \citet{Buchardt2014} has provided a forward rate concept allowing for successful replacement arguments in multi-state models with dependency between interest and transition rates. \citet{Buchardt2014} only considers simple models consisting of at most one non-absorbing state. A natural next step is to extend the definition of forward equations rates and the definition of state-wise forward transition rates to allow for dependency between market risks and biometric and behavioral risks and compare the concepts to that of \citet{Buchardt2014}.

\section*{Acknowledgements}

Christian Furrer's research is partly funded by the Innovation Fund Denmark (IFD) under File No.\ 7038-00007B. We would like to thank Lars Frederik Brandt for fruitful discussions.

\bibliography{references}
\bibliographystyle{plainnat}

\appendix

\section{Proofs} \label{ap:A}

\paragraph*{Proof of Lemma~\ref{lemma:cond}}

In the following we use standard arguments for conditional distributions, expectations and independence. The notation and methodology follows \cite{Beting}.
Because $X$ is Markovian conditionally on $\mu$, it holds that
\begin{align}\label{eq:ap0}
\mathbb{F}^X_{t+} \bigCI \mathbb{F}_t^X \mid \sigma(X_t)  \vee  \mathbb{F}^\mu_\infty.
\end{align}
Furthermore, by construction, the conditional distribution of $(X_s)_{s\leq t}$ given $\mathbb{F}^\mu_\infty$ is $\mathbb{F}_t^\mu$-measurable: it is only a function of $\mu$ through $(\mu_s)_{s \leq t}$, confer with the properties of the (conditional) transition probabilities $P_{jk}^\mu$. It follows from \cite{Beting} Theorem 2.1.5 that
\begin{align}\label{eq:ap1}
\mathbb{F}^\mu_\infty \bigCI \mathbb{F}_t^X \mid \mathbb{F}^\mu_t,
\end{align}
where we have employed the asymmetric formulation of conditional independence (see e.g.\ \cite{Beting} Theorem 3.3.7). In particular, using reduction (see e.g.\ \cite{Beting} Lemma 3.3.5)
\begin{align*}
\mathbb{F}^\mu_{t+} \bigCI \mathbb{F}_t^X \mid \mathbb{F}^\mu_t.
\end{align*}
Also, from \eqref{eq:ap0} and \cite{Beting} Theorem 3.4.1,
\begin{align}\label{eq:ap2}
\mathbb{F}^{X,\mu}_{t+} \bigCI \mathbb{F}_t^X &\mid \sigma(X_t)  \vee  \mathbb{F}^\mu_\infty, 
\end{align}
while from \eqref{eq:ap1} and \cite{Beting} Theorem 3.4.2,
\begin{align}\label{eq:ap3}
\mathbb{F}^\mu_\infty \bigCI \mathbb{F}_t^X &\mid \sigma(X_t)  \vee \mathbb{F}^\mu_t.
\end{align}
Combining \eqref{eq:ap2} and \eqref{eq:ap3} using the same argument as in \cite{Beting} Example 3.4.4,  we obtain
\begin{align*}
\mathbb{F}^{X,\mu}_{t+} \bigCI \mathbb{F}_t^X \mid \sigma(X_t)  \vee  \mathbb{F}^\mu_t
\end{align*}
as desired. \qed

\paragraph*{Proof of Theorem~\ref{thm:existsunique}}

We first show that there exists a unique solution to \eqref{eq:forward_eqs_def} for any $T \in (t,\infty)$.
Fix $T\in(t,\infty)$. In what follows we suppress $t$ notationally and write $\mathcal{P}'_{jk}(T)$ for $\frac{\partial}{\partial T}\mathcal{P}_{jk}(t,T)$. Because $\mathcal{P}_{jk}(T) = 0$ for $k < j$, it follows that $m_{jk}(T) = 0$ for $k < j$, and the remaining system of equations takes the form
\begin{align*}
A(T) \tilde{m}(T) = \tilde{\mathcal{P}}'(T),
\end{align*}
with $\tilde{m}$ being the vector $m$ after removing indexes $k < j$ and $\tilde{\mathcal{P}}'$ being the vector $\mathcal{P}'$ after removing the same indexes,
\begin{align*}
\tilde{m}&=\big(m_{01},\ldots,m_{0J},m_{12},\ldots,m_{1J},\ldots,m_{(J-1)J}\big)^\top\!, \\
\tilde{\mathcal{P}}'&=\big(\mathcal{P}'_{01},\ldots,\mathcal{P}'_{0J},\mathcal{P}'_{12},\ldots,\mathcal{P}'_{1J},\ldots,\mathcal{P}'_{(J-1)J}\big)^\top\!,
\end{align*}
and where $A$ is an upper triangular matrix with diagonal elements
\begin{align*}
\big(\overbrace{\mathcal{P}_{00},\ldots,\mathcal{P}_{00}}^{J\text{ times}},\overbrace{\mathcal{P}_{11},\ldots,\mathcal{P}_{11}}^{J-1\text{ times}},\ldots,\mathcal{P}_{(J-1)(J-1)}\big),
\end{align*}
and where the other entries are zeros or elements of $\mathcal{P}$.

Because
\begin{align*}
P_{jj}^\mu(T)
=
\exp\left\{-\int_t^T \sum_{k > j} \mu_{jk}(s) \, \mathrm{d}s \right\}
>
0,
\end{align*}
it holds that $\mathcal{P}_{jj}(T)>0$, hence in particular
\begin{align*}
\det A = \prod_{j=0}^{J-1} P_{jj}^{J-j} > 0,
\end{align*}
which implies that $A$ is invertible. Hence for fixed $T\in(t,\infty)$ there exists a unique solution given by $m_{jk}(T)=0$ for $k < j$ and
\begin{align*}
\tilde{m}(T) = A^{-1}(T)\tilde{\mathcal{P}}'(T).
\end{align*}
To complete the proof, we have to show that the solution is $\mathbb{F}_t^\mu$-measurable (as a function of $T$). This follows immediately by e.g.\ Cramer's rule if the entries of $A$ are $\mathbb{F}_t^\mu$-measurable. But the entries of $A$ are either zero or elements of $\mathcal{P}$, which are trivially $\mathbb{F}_t^\mu$-measurable, yielding the desired result.

If $\mathcal{P}'(t,\cdot)$ is assumed to be continuous, it follows by similar arguments and an application of e.g.\ Cramer's rule that the solution also is continuous. \qed

\paragraph*{Forward transition rates in the survival model with surrender and free policy}

We consider the state-wise forward transition rates given by \eqref{eq:buc_forward}. Because $\psi$ and $\sigma$ are deterministic, the only non-trivial derivations are related to $m_{13}$ and $m_{14}$. By setting $\sigma=0$ and using symmetry, the derivation of $m_{14}$ follows from the derivation of $m_{13}$. Hence it suffices to derive $m_{13}$. On $(X_t = 1)$ it holds that 
\begin{align*} 
&\mathbb{E}\!\left[ \left.\mathds{1}_{(X_T = 1)} \, \right| \sigma(X_t)  \vee  \mathbb{F}^{\eta,\rho}_t\right] \\ 
&= 
\mathbb{E}\!\left[ \left. e^{-\int_{(t,T]} (\eta(s) + \rho(s) + \sigma(s)) \, \mathrm{d}s}  \, \right| \mathbb{F}^{\eta,\rho}_t\right]\!, \\ 
&\mathbb{E}\!\left[ \left.\mathds{1}_{(X_T = 1)} \left(\rho(T)+\sigma(T)\right) \, \right| \sigma(X_t)  \vee  \mathbb{F}^{\eta,\rho}_t\right] \\ 
&= 
\mathbb{E}\!\left[ \left.e^{-\int_{(t,T]} (\eta(s) + \rho(s) + \sigma(s)) \, \mathrm{d}s} (\rho(T) + \sigma(T)) \, \right| \mathbb{F}^{\eta,\rho}_t\right]\!. 
\end{align*} 
Consequently, 
\begin{align*} 
m_{13}(t,T) 
&= 
\sigma(T) 
+ 
\frac{ 
\mathbb{E}\!\left[ \left.e^{-\int_{(t,T]} (\eta(s) + \rho(s)) \, \mathrm{d}s} \rho(T) \, \right| \mathbb{F}^{\eta,\rho}_t\right] 
} 
{ 
\mathbb{E}\!\left[ \left. e^{-\int_{(t,T]} (\eta(s) + \rho(s)) \, \mathrm{d}s}  \, \right| \,  \mathbb{F}^{\eta,\rho}_t\right] 
} 
\end{align*} 
on $(X_t = 1)$. Let now $C$ be defined by 
\begin{align*} 
C(t,T)
=
\int_{(t,T]} e^{-\int_{(t,s]} \psi(u) \, \mathrm{d}u} \psi(s) e^{-\int_{(s,T]} \sigma(u) \, \mathrm{d}u} \, \mathrm{d}s.
\end{align*}
Note that on $(X_t = 0)$,
\begin{align*}
&\mathbb{E}\!\left[ \left.\mathds{1}_{(X_T = 1)} \, \right| \sigma(X_t)  \vee  \mathbb{F}^{\eta,\rho}_t\right] \\
&=
\mathbb{E}\!\left[ \left. \int_{(t,T]} e^{-\int_{(t,s]} \left(\eta(u) + \rho(u) + \psi(u)\right) \, \mathrm{d}u} \psi(s) e^{-\int_{(s,T]} \left(\eta(u) + \rho(u) + \sigma(u)\right) \, \mathrm{d}u} \, \mathrm{d}s \, \right| \mathbb{F}^{\eta,\rho}_t\right] \\
&=
\mathbb{E}\!\left[ \left. 
e^{-\int_{(t,T]} \left(\eta(s) + \rho(s)\right) \, \mathrm{d}s}
\, \right| \mathbb{F}^{\eta,\rho}_t\right] C(t,T), \\
&\mathbb{E}\!\left[ \left.\mathds{1}_{(X_T = 1)}\left(\rho(T)+\sigma(T)\right) \, \right| \sigma(X_t)  \vee  \mathbb{F}^{\eta,\rho}_t\right] \\
&=
\mathbb{E}\!\left[ \left. 
e^{-\int_{(t,T]} \left(\eta(s) + \rho(s)\right) \, \mathrm{d}s} \left(\rho(T)+\sigma(T)\right)
\, \right| \mathbb{F}^{\eta,\rho}_t\right] C(t,T).
\end{align*}
Thus whenever $\psi$ is strictly positive on a subset of $(t,T]$ with non-zero Lebesgue measure, it holds on $(X_t = 0)$ that
\begin{align*}
m_{13}(t,T)
&=
\frac{
\mathbb{E}\!\left[ \left.e^{-\int_{(t,T]} \left(\eta(s) + \rho(s)\right) \, \mathrm{d}s} (\rho(T) + \sigma(T))
\, \right| \mathbb{F}^{\eta,\rho}_t\right]
}
{
\mathbb{E}\!\left[ \left. e^{-\int_{(t,T]} \left(\eta(s) + \rho(s)\right) \, \mathrm{d}s}
\, \right| \mathbb{F}^{\eta,\rho}_t\right]
} \\
&=
\sigma(T)
+
\frac{
\mathbb{E}\!\left[ \left.e^{-\int_{(t,T]} \left(\eta(s) + \rho(s)\right) \, \mathrm{d}s} \rho(T) \, \right| \mathbb{F}^{\eta,\rho}_t\right]
}
{
\mathbb{E}\!\left[ \left. e^{-\int_{(t,T]} \left(\eta(s) + \rho(s)\right) \, \mathrm{d}s}  \, \right| \,  \mathbb{F}^{\eta,\rho}_t\right]
},
\end{align*}
as the terms involving $C(t,T)$ cancel. To conclude, this shows that 
\begin{align*}
m_{13}(t,T)
=
\sigma(T)
+
\frac{
\mathbb{E}\!\left[ \left.e^{-\int_{(t,T]} \left(\eta(s) + \rho(s)\right) \, \mathrm{d}s} \rho(T) \, \right| \mathbb{F}^{\eta,\rho}_t\right]
}
{
\mathbb{E}\!\left[ \left. e^{-\int_{(t,T]} \left(\eta(s) + \rho(s)\right) \, \mathrm{d}s}  \, \right| \,  \mathbb{F}^{\eta,\rho}_t\right]
}
\end{align*}
is an $\mathbb{F}_t^{\eta,\rho}$-measurable version of the state-wise forward transition rates. \qed

\end{document}